\documentclass[a4paper,10pt,reqno]{amsart}

\usepackage{amsmath}
\usepackage{amsthm}
\usepackage{amsfonts}
\usepackage{amssymb}
\usepackage{mathrsfs}
\usepackage[shortlabels]{enumitem}
\usepackage{graphicx}
\usepackage{subfigure}
\usepackage{color}
\usepackage[dvipsnames]{xcolor}



\newtheorem{theorem}{Theorem}
\newtheorem{lemma}{Lemma}
\theoremstyle{definition}
\newtheorem{rem}{Remark}


\newcommand{\N}{\mathbb{N}}
\newcommand{\Z}{\mathbb{Z}}
\newcommand{\Q}{\mathbb{Q}}
\newcommand{\R}{\mathbb{R}}

\newcommand{\C}{\mathbb{C}}

\newcommand\e{\mathrm{e}}
\newcommand\I{\mathrm{i}}
\newcommand\re{\operatorname{Re}}
\newcommand\im{\operatorname{Im}}

\newcommand{\rd}{\mathrm{d}}

\newcommand\dom{\mathcal D}

\newcommand\lbar\overline

\newcommand\eps\varepsilon
\renewcommand\epsilon\varepsilon
\renewcommand\rho\varrho
\newcommand\al\alpha
\newcommand\lm\lambda

\newcommand\ds\displaystyle

\newcommand\p\partial

\newcommand{\tolong}{\longrightarrow}

\newcommand{\s}{\stackrel{s}{\rightarrow}}

\newcommand{\slong}{\stackrel{s}{\longrightarrow}}

\newcommand{\beq}{\begin{equation}}
\newcommand{\eeq}{\end{equation}}
\newcommand{\be}{\begin{equation*}}
\newcommand{\ee}{\end{equation*}}
\newcommand{\bmat}{\begin{pmatrix}}
\newcommand{\emat}{\end{pmatrix}}

\author{Sabine B\"ogli}
\address[S.\ B\"ogli]{
Department of Mathematics,
Imperial College London,
Huxley Building,
180 Queen's Gate, London SW7 2AZ, UK}
\email{s.boegli@imperial.ac.uk}

\thanks{
}

\usepackage[update,prepend]{epstopdf}

\title[Schr\"odinger operator]{Schr\"odinger operator with non-zero accumulation points of complex eigenvalues}
\begin{document}

\subjclass[2010]{35J10, 81Q12}

\keywords{Non-selfadjoint Schr\"odinger operator, complex potential, accumulation point of eigenvalues, Lieb-Thirring inequalities}

\date{\today}

\begin{abstract}
We study Schr\"odinger operators $H=-\Delta+V$ in $L^2(\Omega)$ where $\Omega$ is $\R^d$ or the half-space $\R_+^d$, subject to (real) Robin boundary conditions in the latter case. For $p>d$ we construct a non-real potential $V\in L^p(\Omega)\cap L^{\infty}(\Omega)$ that decays at infinity 
so that $H$ has infinitely many non-real eigenvalues accumulating at every point of the essential spectrum $\sigma_{\rm ess}(H)=[0,\infty)$. 
This demonstrates that the Lieb-Thirring inequalities for selfadjoint Schr\"odinger operators are no longer true in the non-selfadjoint case.
\end{abstract}
$\left.\right.$\vspace{-2mm}

\maketitle
\vspace{-5mm}

\section{Introduction}
In three seminal papers~\cite{pavlov1, pavlov2, pavlov3} from the 1960s, Pavlov studied Schr\"odinger operators $H=-\Delta+V$ in $L^2(0,\infty)$ with real-valued rapidly decaying potentials~$V$, subject to a non-selfadjoint Robin boundary condition $f'(0)=h f(0)$ for some \linebreak $h\in\C$.
In contrast to the selfadjoint case, for non-real $h$ the discrete eigenvalues are complex and can, in principle, accumulate at a \emph{non-zero} point of the essential spectrum $[0,\infty)$. Using inverse spectral theory, Pavlov proved the existence of a potential $V$ and a boundary condition  so that $H$ has infinitely many non-real eigenvalues that accumulate at a prescribed  point $\lm$ of the essential spectrum $\sigma_{\rm ess}(H)=[0,\infty)$. He further studied the structure of the set of accumulation points.
Since then, it has been an open question whether these results can be modified so that the non-selfadjointness is not coming from the boundary conditions but from a \emph{non-real} potential $V$. 

The aim of the present paper is to fill this gap by proving the following two results. 
In the first theorem we address non-selfadjoint Schr\"odinger operators in $L^2(\R^d)$ for any dimension $d\in\N$.
\begin{theorem}\label{mainthm}
Let $p>d$ and $\mathcal E>0$. 
There exists $ V\in L^{\infty}(\R^d)\cap L^p(\R^d)$ with $\max\{\|V\|_{\infty},\|V\|_p\}\leq \mathcal E$ that decays at infinity so that the Schr\"odinger operator 
$$H:=-\Delta+V, \quad \dom(H):= W^{2,2}(\R^d),$$
 has infinitely many eigenvalues in the open lower complex half-plane
that accumulate at every point in $[0,\infty)$.
\end{theorem}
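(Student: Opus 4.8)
The strategy is to construct the potential as a sum $V=\sum_{n}V_{n}$ of infinitely many ``building blocks'', each $V_{n}$ supported in a bounded region $R_{n}\subset\Rd$, with the $R_{n}$ pairwise disjoint and marching off to infinity; the $n$-th block is designed so that $-\Delta+V_{n}$ has a single eigenvalue close to a prescribed point $z_{n}$ in the open lower complex half-plane. One fixes an enumeration $\{e_{j}\}_{j}$ of a countable dense subset of $[0,\infty)$ and arranges the real parts of the $z_{n}$ to run through $\{e_{j}\}_{j}$, each value infinitely often, with the imaginary parts tending to $0$. Once one knows the eigenvalues survive the superposition, $H=-\Delta+V$ has infinitely many eigenvalues in the open lower half-plane accumulating at every $e_{j}$, hence, by density, at every point of $[0,\infty)$.

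\emph{The building block (the crux).} Given a target energy $E=|\xi|^{2}\ge 0$ and a small $\gamma>0$, one must produce a compactly supported $V_{E,\gamma}$ whose $L^{\infty}$-norm is small \emph{uniformly in $E$}, such that $-\Delta+V_{E,\gamma}$ has an eigenvalue near $E-\I\gamma$. The mechanism: an eigenfunction at energy $\approx E$ oscillates at spatial frequency $\approx|\xi|$, so one takes $V_{E,\gamma}$ to carry an oscillatory factor $\e^{2\I\xi\cdot x}$ (or $\cos(2\xi\cdot x+\vartheta)$) times a slowly varying envelope of small amplitude $c$; since $|\e^{2\I\xi\cdot x}|=1$, the sup-norm of $V_{E,\gamma}$ is of order $c$, \emph{independently of} $E$. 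Heuristically (a coupled-mode, Bragg-reflection picture) this oscillation resonantly couples the waves $\e^{+\I\xi\cdot x}$ and $\e^{-\I\xi\cdot x}$ and, acting over a region of diameter $\sim|\xi|/c$, reflects energy-$E$ waves strongly enough to trap a quasi-bound state; giving the envelope a small negative imaginary part displaces the resulting eigenvalue a distance $\sim c$ below the real axis, so one may take $\gamma\sim c$. This should be made rigorous and quantitative through the Birman--Schwinger principle: $z$ is an eigenvalue of $-\Delta+V_{E,\gamma}$ iff $1\in\sigma\!\bigl(-\cB_{z}(V_{E,\gamma})\bigr)$, where $\cB_{z}(V)=|V|^{1/2}(-\Delta-z)^{-1}\,\sgn(V)\,|V|^{1/2}$, by locating such a $z$ near $E-\I\gamma$ and checking, via analytic Fredholm theory, that it is stable under small perturbations of $\cB_{z}$. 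The exponent $p$ enters here: $R_{E,\gamma}$ must be a genuinely $d$-dimensional region of diameter $\sim|\xi|/c$, since a merely ``thin'' (bounded-cross-section) potential would make the near-$E$ state a resonance rather than a true eigenvalue because of transverse leakage into the continuum, so one needs confinement in all $d$ directions. Hence $|R_{E,\gamma}|\sim(|\xi|/c)^{d}$ and $\|V_{E,\gamma}\|_{p}\sim c\,|R_{E,\gamma}|^{1/p}\sim c^{\,1-d/p}|\xi|^{\,d/p}$, which is finite and can be made arbitrarily small precisely when $p>d$; this is the borderline case and is exactly how the hypothesis is used. The case $E=0$ is easier --- a small, slowly decaying or rescaled complex potential already produces eigenvalues near $0$ --- and is folded into the same scheme.

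\emph{Superposition and persistence.} Fix pairs $(E_{n},\gamma_{n})$ with $\gamma_{n}\downarrow 0$ in which every $e_{j}$ appears as $E_{n}$ for infinitely many $n$. Let $V_{n}$ be a translate of the block $V_{E_{n},\gamma_{n}}$ to a region $R_{n}$, with the $R_{n}$ pairwise disjoint, escaping to infinity, and $\dist(R_{n},R_{m})$ growing rapidly; choose the amplitudes $c_{n}$ so small and fast-decaying in $n$ that $\sup_{n}\|V_{n}\|_{\infty}\le\mathcal E$ and $\sum_{n}\|V_{n}\|_{p}^{p}\le\mathcal E^{p}$. Then $V:=\sum_{n}V_{n}\in L^{\infty}(\Rd)\cap L^{p}(\Rd)$ with $\max\{\|V\|_{\infty},\|V\|_{p}\}\le\mathcal E$, and $V$ decays at infinity since the $R_{n}$ escape to infinity and $c_{n}\to 0$. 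To see that $H=-\Delta+V$ retains an eigenvalue near each $z_{n}:=E_{n}-\I\gamma_{n}$: because the $R_{m}$ are disjoint, $\cB_{z}(V)$ has block structure with diagonal blocks $\cB_{z}(V_{m})$ and off-diagonal blocks built from the kernel of $(-\Delta-z)^{-1}$ between different $R_{m}$'s; for $z$ near $z_{n}$ these decay exponentially at rate $\sim\im\sqrt{z}\cdot\dist\gtrsim(\gamma_{n}/\sqrt{E_{n}})\,\dist(R_{n},R_{m})$, hence are negligible once $\dist(R_{n},R_{m})$ is large. So $\cB_{z}(V)$ is block-diagonal up to an exponentially small error, each diagonal block $\cB_{z_{n}}(V_{n})$ has $1$ in its (stable) spectrum, and a Rouch\'e/analytic-Fredholm argument produces $z$ within $o(\gamma_{n})$ of $z_{n}$ with $1\in\sigma(-\cB_{z}(V))$, i.e.\ an eigenvalue of $H$; as $\gamma_{n}>0$ is fixed and the perturbation tiny, $z$ lies in the open lower half-plane. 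Hence $H$ has infinitely many eigenvalues there, accumulating at every $e_{j}$ and thus at every point of $[0,\infty)$.

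The principal difficulty is the building-block construction: obtaining, with fully quantitative and perturbation-stable Birman--Schwinger estimates, a \emph{genuine} eigenvalue near an arbitrarily large prescribed energy whose creation costs only a small $L^{\infty}$-norm --- thanks to the unimodular oscillatory factor, which decouples the cost from the energy --- and a small $L^{p}$-norm, which the required $d$-dimensional confinement permits exactly when $p>d$; the superposition and the accumulation bookkeeping are comparatively routine.
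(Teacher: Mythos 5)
Your overall architecture matches the paper's: build $V=\sum_n V_n$ from pairwise-separated blocks marching to infinity, each block supplying one eigenvalue near a prescribed $z_n$ in the lower half-plane, then show the eigenvalues survive superposition, and arrange the $z_n$ to be dense in $[0,\infty)$. The two intermediate steps, however, are treated quite differently from the paper, and the building block --- which you rightly call ``the crux'' --- is left as a heuristic and, as proposed, contains a real gap.

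\textbf{The building block.} The paper does not use an oscillatory potential. It takes a \emph{radial} potential $U_{c,t,a}$ that is simply a constant $c\in\C$ on a ball $B(te_d,a)$ (with a small $r^{-2}$ tail outside for $d\neq1,3$), and writes down an explicit radial eigenfunction in terms of Bessel functions: inside the ball it is $\propto r^{1-d/2}J_{d/2-1}(\tau r)$ with $\tau=\nu+\I\eta$, outside it is $\propto r^{-(d-1)/2}\e^{\I k r}$. Matching at $r=a$ fixes $k$ by \eqref{def.km}; the asymptotics of $J_n$ then give $\im k>0$ (so the eigenfunction is $L^2$) and $\mu=k^2$ with $\im\mu<0$, $\mu\to\lm$, $\|U\|_p^p=\mathcal O(\eta^{p-d}\ln(\nu/\eta)^d)\to0$, which is where $p>d$ enters. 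In contrast, your mechanism --- a unimodular factor $\e^{2\I\xi\cdot x}$ producing Bragg reflection between $\e^{\pm\I\xi\cdot x}$ --- confines only along the $\xi$-direction. Making the support ``genuinely $d$-dimensional'' does not remove the transverse leakage you are worried about: a potential of the form (envelope)$\times\e^{2\I\xi\cdot x}$ still essentially commutes with translations transverse to $\xi$ on scales below the envelope, so transverse momentum is approximately conserved and the trapped state should generically be a resonance, not an $L^2$ eigenfunction, for $d\ge2$. You would need either confinement in all $d$ directions (e.g.\ a genuine $d$-dimensional oscillation pattern, or a radial ansatz as in the paper) or a different argument, and none is supplied. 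Also, the claim ``giving the envelope a small negative imaginary part displaces the eigenvalue below the axis'' is plausible but not established; the paper has to work for this, tracking signs via the Bessel asymptotics. So the crux is not proved, and as stated the mechanism is not obviously correct in $d\ge2$.

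\textbf{Superposition.} Here you take a genuinely different and legitimate route. The paper proves a soft persistence lemma (Lemma~\ref{lemmasum}): strong resolvent convergence of $-\Delta+V_1(\cdot+te_d)+V_2$ to $-\Delta+V_2$ as $t\to\infty$, combined with the ``limiting essential spectrum'' criterion of~\cite{boegli-limitingess} (ruling out spectral pollution by Rellich--Kondrachov and decay of $V_2$), and then controls the infinite tail by a Kato-type bound ($\gamma_n$ in the proof of Theorem~\ref{mainthm}). Your Birman--Schwinger block-structure argument with exponentially decaying off-diagonal entries is a more quantitative alternative and should be viable; it buys explicit control of how far the eigenvalue moves (in terms of $\im\sqrt{z_n}$ and $\dist(R_n,R_m)$), at the cost of having to verify analytic-Fredholm/Rouch\'e stability of $1\in\sigma(-\cB_{z_n}(V_n))$, isolation of that characteristic value from those of the other blocks, and summability of the off-diagonal contributions over infinitely many blocks. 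These details are doable but are not routine, and are only sketched.

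\textbf{Bookkeeping.} The final density/accumulation step is essentially identical to the paper's (the paper enumerates $(\Q\cap(0,\infty))\times\N$ so each rational appears infinitely often with shrinking radii).

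In summary: the structure of the argument and the $p>d$ scaling heuristic are right, and your superposition route is a valid alternative to the paper's, but the building-block construction --- the step on which everything rests --- is incomplete and, for $d\ge2$, the unidirectional Bragg mechanism as described would plausibly give resonances rather than eigenvalues. The paper avoids this by an explicit radial Bessel-function construction.
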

In the second main result we replace the whole Euclidean space $\R^d$ by
the half-space $\R_+^d:=\{x=(x_1,\dots,x_d)^t\in\R^d:\,x_d>0\}$ and impose (real) Robin boundary conditions.
\begin{theorem}\label{mainthmboundary}
Let $p>d$ and  $\mathcal E>0$, and let $\phi\in [0,\pi)$.
There exists $ V\in  L^{\infty}(\R_+^d)\cap L^p(\R_+^d)$ with  $\max\{\|V\|_{\infty},\|V\|_p\}\leq \mathcal E$ that decays at infinity so that the Schr\"odinger operator 
$$H:=-\Delta+V, \quad \dom(H):=\big\{f\in W^{2,2}(\R_+^d):\,\cos(\phi)\partial_{x_d} f+\sin(\phi) f=0 \text{ on }\partial\R_+^d\big\},$$
 has infinitely many eigenvalues in the open lower complex half-plane
that accumulate at every point in $[0,\infty)$.
\end{theorem}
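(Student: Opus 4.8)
The plan is to derive Theorem~\ref{mainthmboundary} from the machinery behind Theorem~\ref{mainthm}, the only new ingredient being resolvent bounds for the Robin Laplacian. Recall that in Theorem~\ref{mainthm} the potential has the form $V=\sum_{n\in\N}V_n$ with each $V_n$ supported in a ball $B_n=B(c_n,r_n)$, the balls being pairwise disjoint with separation $|c_n-c_m|$ that may be prescribed as large as needed; the bounds $\|V\|_\infty,\|V\|_p\le\mathcal E$ and the decay of $V$ at infinity come from control of the amplitudes of the $V_n$ together with the sparseness of the family $\{B_n\}$, while the spectral conclusion comes from a Birman--Schwinger analysis: each $V_n$ alone produces, for $-\Delta$ on $\R^d$, an eigenvalue in the open lower half-plane close to a prescribed target $\mu_n\in[0,\infty)$, and the large separation of the $B_n$ together with the off-diagonal decay of the free resolvent $R_0(z)=(-\Delta-z)^{-1}$ for $z$ below the real axis promotes these local eigenvalues to genuine eigenvalues of $-\Delta+V$ near every $\mu_n$; choosing $\{\mu_n\}_{n\in\N}$ dense in $[0,\infty)$ gives accumulation at every point. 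For the half-space I would keep \emph{exactly the same} potential $V$, but place the centres inside $\R_+^d$ and, importantly, deep inside it, $c_n=(t_n,0,\dots,0,D_n)$, with $t_n$ and $D_n$ growing fast enough that the $B_n$ are pairwise disjoint, stay at distance $\ge 1$ from $\partial\R_+^d$, and satisfy the separation used in the proof of Theorem~\ref{mainthm}. Then $V\in L^\infty(\R_+^d)\cap L^p(\R_+^d)$ with the same norm bounds and the same decay, and since $V$ is relatively compact one has $\sigma_{\rm ess}(H)=\sigma_{\rm ess}(L_0)\supseteq[0,\infty)$, where $L_0=-\Delta$ is the Robin Laplacian with $\cos(\phi)\partial_{x_d}f+\sin(\phi)f=0$ on $\partial\R_+^d$. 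It remains to re-run the Birman--Schwinger argument with $R_0(z)$ replaced by $R_\phi(z):=(L_0-z)^{-1}$.

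For this one needs $R_\phi(z)$ to enjoy the same off-diagonal decay and the same mapping properties as $R_0(z)$, locally uniformly for $z$ in the relevant part of the open lower half-plane; this is extracted from the explicit kernel. Partial Fourier transform in $x'\in\R^{d-1}$ reduces $L_0-z$ to the half-line operators $-\partial_t^2+|\xi|^2-z$ on $(0,\infty)$ with $u'(0)+\tan(\phi)u(0)=0$ (Dirichlet for $\phi=\pi/2$), whose Green's function is $\frac1{2\kappa}\big(e^{-\kappa|s-t|}+m_\phi(\xi;z)\,e^{-\kappa(s+t)}\big)$ with $\kappa=\kappa(\xi)=\sqrt{|\xi|^2-z}$ ($\re\kappa>0$) and $m_\phi(\xi;z)=\frac{\kappa(\xi)+\tan(\phi)}{\kappa(\xi)-\tan(\phi)}$. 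Since $\frac1{2\kappa}e^{-\kappa(s+t)}$ is the free whole-line Green's function evaluated at the reflected point $(s,-t)$, this yields
\[
R_\phi(z)(x,y)=R_0(z)(x-y)+K_\phi(z)(x,y),
\]
where $K_\phi(z)$ is obtained from the reflected free resolvent $R_0(z)(x-\widetilde y)$, $\widetilde y=(y',-y_d)$, by applying the Fourier multiplier $m_\phi(\,\cdot\,;z)$ in the $x'$-variables. For $\phi\in\{0\}\cup[\pi/2,\pi)$ one checks $|m_\phi(\xi;z)|\le1$, so $K_\phi(z)$ is no larger than the reflected free resolvent; as the reflection sends $\supp V$ into the disjoint region $\{x_d\le-D_n\}$, the term $|V|^{1/2}K_\phi(z)|V|^{1/2}$ is controlled just like the cross-terms between distant bumps in Theorem~\ref{mainthm}, and the proof carries over with cosmetic changes only.

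The one delicate case is $\phi\in(0,\pi/2)$, where $\tan(\phi)>0$ and $m_\phi(\xi;z)$ is \emph{singular} along the surface $\kappa(\xi)=\tan(\phi)$, i.e.\ $|\xi|^2=z+\tan^2(\phi)$: this is the locus of the bound-state branch of $L_0$, which the construction's eigenvalues $z_n\to\mu_n$ (with $\im z_n\to0$) do approach. This is the step I expect to be the main obstacle. I would resolve it by exploiting that the potential sits deep inside $\R_+^d$: writing $m_\phi=1+\frac{2\tan(\phi)}{\kappa-\tan(\phi)}$ and $\kappa-\tan(\phi)=\frac{|\xi|^2-z-\tan^2(\phi)}{\kappa+\tan(\phi)}$, the singular part of $K_\phi(z)$ is, up to bounded factors, the $(d-1)$-dimensional free resolvent $\big(-\Delta_{x'}-(z+\tan^2(\phi))\big)^{-1}$ composed with multiplication by $e^{-\kappa(x_d+y_d)}$, which on $\supp V_n$ carries a gain $e^{-2\,\re\kappa\,D_n}$; meanwhile that lower-dimensional free resolvent at the positive energy $z+\tan^2(\phi)$, conjugated by the weights $|V|^{1/2}$ of the Birman--Schwinger operator, obeys the same uniform resolvent (uniform Sobolev) bounds as the $d$-dimensional one, uniformly in the spectral parameter, precisely because $p>d>d-1$. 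Hence, with the centres placed deep enough, $|V|^{1/2}K_\phi(z)|V|^{1/2}$ is a small, uniformly controlled perturbation of the free-resolvent Birman--Schwinger operator, and the rest of the argument of Theorem~\ref{mainthm} applies verbatim: each $V_n$ yields an eigenvalue of $L_0+V_n$ in the open lower half-plane near $\mu_n$, the fast-growing separation of the $B_n$ and the off-diagonal decay of $R_\phi(z)$ promote these to genuine eigenvalues of $H=L_0+V$ accumulating at each $\mu_n$, and density of $\{\mu_n\}$ in $[0,\infty)$ completes the proof. The parameter $\phi$ is fixed throughout, so it enters only through the constants above and no uniformity in $\phi$ is required.
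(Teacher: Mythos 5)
Your proposal is built on a reconstruction of the Theorem~\ref{mainthm} machinery that does not match the paper. The whole-space proof is \emph{not} a Birman--Schwinger argument with widely separated bumps and off-diagonal decay of the free resolvent. Instead, Lemma~\ref{lemma.deltaeps.dD} produces, for a single radial bump $U_{c,t,a}$, an explicit eigenfunction built from Bessel functions (so the eigenvalue is produced by an ODE computation, not a resolvent estimate), and Lemma~\ref{lemmasum} promotes this to an eigenvalue of the sum $H_{n-1}+U_{c_n,t_n,a_n}$ for large shifts $t_n$ via strong resolvent convergence together with the limiting essential spectrum results of \cite{boegli-limitingess}; the tails $\sum_{j>n}U_{c_j,t_j,a_j}$ are then handled as small $L^\infty$ perturbations using Kato's analytic perturbation theory, not via resolvent kernel decay. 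This softer route is precisely what lets the proof work for $p>d$, a regime in which the Birman--Schwinger operator $|V|^{1/2}R_0(z)|V|^{1/2}$ is \emph{not} uniformly bounded as $z$ approaches $[0,\infty)$ (Kenig--Ruiz--Sogge type uniform estimates in dimension $n$ require $p\le\frac{n+1}{2}$, so for $n=d-1$ they fail already for $p>\frac{d}{2}$, let alone $p>d$). Your claim that the $(d-1)$-dimensional free resolvent ``obeys the same uniform resolvent (uniform Sobolev) bounds\dots precisely because $p>d>d-1$'' therefore has the logic inverted: $p>d$ is exactly the range where those uniform bounds are unavailable, which is what makes accumulation of eigenvalues possible in the first place.

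The paper's treatment of the half-space (Lemmas~\ref{lemma.deltaeps.half-line} and~\ref{lemmasumBoundary}) is correspondingly soft and bypasses the Robin Green's function entirely. Rather than splitting $R_\phi(z)=R_0(z)+K_\phi(z)$ and estimating the reflected term and the multiplier $m_\phi$, the paper compares the Robin operator on the \emph{shifted domain} $\R_+^d-te_d$ (with the translated potential) to the whole-space operator $-\Delta+V_2$, shows strong resolvent convergence as $t\to\infty$ using only that test functions $f\in C_0^\infty(\R^d)$ eventually lie in $\dom(H_{2,t})$, and again invokes the limiting essential spectrum to transfer the eigenvalue. This sidesteps the singular behaviour of $m_\phi$ at $\kappa=\tan\phi$ (and the associated negative essential spectrum threshold $-\tan^2\phi$ for $\phi\in(0,\pi/2)$) completely. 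Your underlying intuition --- push the potential deep into $\R_+^d$ so that the boundary influence becomes negligible --- is the same driving idea, and the exponential gain $e^{-2\,\re\kappa\,D_n}$ you identify would indeed control the singular term near $\kappa=\tan\phi$ where $\re\kappa$ is bounded below; but the away-from-singularity piece cannot be closed by the quoted uniform Sobolev bound, and more fundamentally the entire Birman--Schwinger scaffolding you are trying to adapt is not present in the paper. To make your route rigorous you would first need to re-prove Theorem~\ref{mainthm} in a genuinely Birman--Schwinger framework valid for $p>d$, which is a nontrivial additional project.
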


Theorem~\ref{mainthm} is also relevant in the context of \emph{Lieb-Thirring inequalities} (after Lieb and Thirring~\cite{Lieb-Thirring}, see also~\cite{Laptev-2012} for an overview) and their (possible) generalisation to complex potentials~\cite{Laptev-2006, Laptev-2009,Demuth-2009}.
In the selfadjoint case  the Lieb-Thirring inequalities state that, if 
\beq \label{eq.p}
p\geq \frac{d}{2} \quad\text{for }d\geq 3; \quad p>1\quad\text{for }d=2; \quad p\geq 1\quad\text{for }d=1,
\eeq
 then there exists $C_{d,p}>0$ so that for every real $V\in L^p(\R^d)$ the negative eigenvalues of the Schr\"odinger operator $H=-\Delta+V$ satisfy
\beq\label{eq.LT}
\sum_{\lm\in\sigma(H)\backslash[0,\infty)}|\lm|^{p-\frac{d}{2}} \leq C_{d,p}\|V\|_p^p
\eeq
where in the sum each eigenvalue is repeated according to its algebraic multiplicity.
In fact, the inequality remains true if $V$ on the right hand side is replaced by the negative part $V_{-}:=\max\{0,-V\}$.
Now Theorem~\ref{mainthm} demonstrates that,  if $p>d$, an  inequality like~\eqref{eq.LT} cannot hold in the non-selfadjoint case since, for the constructed $V$ in Theorem~\ref{mainthm}, the left hand side is infinite whereas the right hand side is finite (and, in fact, arbitrarily small). The sharpness of $p>d$ (in relation to $p$ in~\eqref{eq.p}) is discussed in Remark~\ref{remsharp} below.
For possible modifications of Lieb-Thirring inequalities see~\cite{openproblem} and the references therein.

Theorem~\ref{mainthm} is proved in Section~\ref{secRd}, and Theorem~\ref{mainthmboundary} in Section~\ref{secRdboundary}.
In contrast to Pavlov's inverse spectral theory approach using an elaborate analysis of Weyl $m$-functions, our proofs are constructive. For both $\Omega=\R^d$ and $\Omega=\R_+^d$ the proof relies on the following two main ingredients (see  Lemmas~\ref{lemma.deltaeps.dD},~\ref{lemmasum} and~\ref{lemma.deltaeps.half-line},~\ref{lemmasumBoundary} for the precise formulation):
\begin{enumerate}
\item[\rm(I)] 
For an arbitrary $\lm\in (0,\infty)$ we construct $V_0\in L^{\infty}(\Omega)\cap L^p(\Omega)$ with arbitrarily small $\|V_0\|_{\infty}$, $\|V_0\|_p$ and that decays at infinity so that $-\Delta+V_0$ in $L^2(\Omega)$ has an eigenvalue $\mu$ close to~$\lm$.

\item[\rm(II)]
For two potentials $V_1\in L^{\infty}(\Omega)$, $V_2\in L^{\infty}(\R^d)$ decaying at infinity, consider the corresponding Schr\"odinger operators $$H_1:=-\Delta+V_1\quad\text{in}\quad L^2(\Omega), \quad H_2:=-\Delta+V_2\quad\text{in}\quad L^2(\R^d),$$
 and assume that there exists $\mu \in\sigma(H_2)\backslash\sigma(H_1)$.
If we shift $V_2$ in direction of the $d$-th coordinate vector $e_d$ to $V_2(\cdot-t e_d)$ for a sufficiently large $t>0$, then $H_1+\chi_{\Omega}V_2(\cdot-te_d)$ in $L^2(\Omega)$ has an eigenvalue $\mu_t$ close to $\mu$.
\end{enumerate}
The potential $V$ in Theorems~\ref{mainthm},~\ref{mainthmboundary} is then an infinite sum of functions $V_j$, $j\in\N$, that we construct inductively using (I) and (II) above.

Since we do not know the exact value of the ``sufficiently large'' shift $t$ in (II), we cannot control the exact decay rate of $V$ at infinity. For $\Omega=\R^3$ or $\Omega=(0,\infty)$, subject to the boundary condition $f(0)=0$ or $f'(0)=h f(0)$, $h\in\C$,  in the half-line case, Pavlov~\cite{pavlov1} proved that if
\beq\label{eq.decay.Pavlov}\exists\,\eps>0:\quad \sup_{x\in \Omega}|V(x)|\e^{\eps\sqrt{|x|}}<\infty,\eeq
then $-\Delta+V$ in $L^2(\Omega)$ has only finitely many eigenvalues. Therefore, the potential $V$ in Theorem~\ref{mainthm} (for $d=3$) and Theorem~\ref{mainthmboundary} (for $d=1$) has to decay so slow to violate~\eqref{eq.decay.Pavlov}.
The condition~\eqref{eq.decay.Pavlov} for $\Omega=(0,\infty)$ is sharp; Pavlov~\cite{pavlov2} proved that it cannot be relaxed to $\sup_{x\in (0,\infty)}|V(x)|\e^{\eps x^{\beta}}<\infty$ for any $\beta\in \big(0,\frac{1}{2}\big)$.
For an arbitrary odd dimension $d$, see~\cite{FLS-2016} and the references therein for conditions guaranteeing a finite number of eigenvalues.

We employ the following notation and conventions. 
The open ball in $\R^d$ with radius $r>0$ around $v\in\R^d$ is $B(v,r):=\{x\in\R^d:\,|x-v|<r\}$, and analogously $B(z,r)\subset\C$ denotes the open disk of radius $r>0$ around $z\in\C$.
For a subset $\Lambda\subset\C$ the complex conjugated set is $\Lambda^*:=\{\overline{\lm}:\,\lm\in\Lambda\}$, and for $z\in\C$ its distance to $\Lambda$ is ${\rm dist}(z,\Lambda):=\inf_{\lm\in\Lambda}|z-\lm|$.
Take a domain $\Omega\subset\R^d$ and $p\in [1,\infty]$. A function $f\in L^p(\Omega)$ is viewed as an element of $L^p(\R^d)$ by extending it by zero outside~$\Omega$, with $L^p$ norm $\|f\|_p$; conversely,  if we multiply a function $g\in L^p(\R^d)$  with the characteristic function $\chi_{\Omega}$ of $\Omega$, then $\chi_{\Omega}g\in L^p(\Omega)$.
If not specified by an index, the norm $\|\cdot\|$ always refers to the one of the Hilbert space $L^2(\R^d)$.
The operator domain, spectrum and resolvent set of an operator $H$ are denoted by $\dom(H)$, $\sigma(H)$ and $\rho(H)$, and the Hilbert space adjoint operator is $H^*$.
An identity operator is denoted by $I$, and scalar multiples $\lm I$ for $\lm\in\C$ are written as $\lm$.
Analogously,  in $L^2(\R^d)$  the operator of multiplication with an $L^{\infty}(\R^d)$ function $V$ is simply~$V$; its adjoint operator is the multiplication operator with the complex conjugated function $\overline{V}$.
Weak convergence in $L^2(\R^d)$ is denoted by $f_n\stackrel{w}{\to} f$, and strong operator convergence is $H_n\s H$.

\section{Schr\"odinger operator in $L^2(\R^d)$}\label{secRd}

Throughout this section, all operator domains are $W^{2,2}(\R^d)$.
The functions $V_j$, $j\in\N$, mentioned in the introduction will be of the form
$$U_{c,t,a}(x):=\begin{cases}c, & x\in B(te_d,a),\\ \displaystyle -\frac{(d-3)(d-1)}{4|x-te_d|^2}, &x\in \R^d\backslash B(te_d,a),\end{cases}$$
where $c\in\C$, $t\in\R$ and $a>0$.
Note that in dimension $d=1$ and $d=3$ the function $U_{c,t,a}$ vanishes outside the ball $B(te_d,a)$.

Before we study finite or infinite sums, we reduce our attention to a potential of the form $U_{c,t,a}$.

\begin{lemma}\label{lemma.deltaeps.dD}
Let $\lm\in (0,\infty)$ and $p>d$.
For any $\eps,\delta,r>0$ there exist $a>0$, $c\in\C$ and $\mu\in\C$ with $\im\mu<0$ such that, for every $t\in\R$,
$$ \|U_{c,t,a}\|_{p}< \eps, \quad \|U_{c,t,a}\|_{\infty}< \delta, \quad |\mu-\lm|< r,$$ 
and $\mu$ is an eigenvalue of $-\Delta+U_{c,t,a}$.
\end{lemma}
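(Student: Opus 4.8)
The plan is to produce the eigenvalue $\mu$ via a \emph{radial} eigenfunction of $-\Delta+U_{c,0,a}$, exploiting that the tail $-\tfrac{(d-1)(d-3)}{4|x|^2}$ of $U_{c,t,a}$ is tailored to cancel the effective potential arising in the radial reduction of $-\Delta$. Since $\|U_{c,t,a}\|_p$ and $\|U_{c,t,a}\|_\infty$ are independent of $t$, and $-\Delta+U_{c,t,a}$ is unitarily equivalent (via translation by $te_d$) to $-\Delta+U_{c,0,a}$, it suffices to find $a>0$, $c\in\C$ and $\mu$ with the claimed properties in the case $t=0$. The operator $-\Delta+U_{c,0,a}$ is rotation invariant, so on its radial subspace, under the (up to normalisation) unitary transform $L^2((0,\infty),\rd r)\ni g\mapsto f=|x|^{-(d-1)/2}g(|x|)$ onto the radial subspace, the eigenvalue equation $(-\Delta+U_{c,0,a})f=\mu f$ for radial $f\in\WttRd$ becomes
\[
 -g''+\Big(\tfrac{(d-1)(d-3)}{4r^2}+c\Big)g=\mu g\ \text{ on }(0,a),\qquad -g''=\mu g\ \text{ on }(a,\infty),
\]
with $g\in L^2((0,\infty),\rd r)$ and with the behaviour at the origin inherited from $\WttRd$ — i.e.\ $g(r)\sim r^{(d-1)/2}$ as $r\to0^+$ when $d\le3$, while for $d\ge4$ the endpoint $0$ is limit point and no condition is required. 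Observe that the tail has removed the singular term on $(a,\infty)$; we are left with a one-dimensional matching problem.

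For $\mu\in\C\setminus[0,\infty)$ let $k(\mu)$ be the branch of $\sqrt\mu$ with $\im k(\mu)>0$, so that $r\mapsto\e^{\I k(\mu)r}$ spans the square-integrable solutions of $-g''=\mu g$ on $(a,\infty)$; and on $(0,a)$ let $\psi(\,\cdot\,;w)$ be the solution of $-g''+\tfrac{(d-1)(d-3)}{4r^2}g=wg$ with $\psi(r;w)\sim r^{(d-1)/2}$ as $r\to0^+$, which up to normalisation is $\psi(r;w)=\sqrt r\,J_{(d-2)/2}(\sqrt w\,r)$ (an elementary trigonometric expression in odd dimensions). Matching $g$ and $g'$ at $r=a$, I see that $\mu$ is an eigenvalue if and only if
\[
 \psi'(a;\mu-c)=\I\,k(\mu)\,\psi(a;\mu-c).
\]

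To solve this I would fix a small $\eta>0$ (to be quantified) and set $\mu:=\lm-\I\eta$, so $\im\mu=-\eta<0$ and $|\mu-\lm|=\eta$, and $k(\mu)=-\sqrt\lm+\I\eta/(2\sqrt\lm)+O(\eta^2)$. Writing $w=\mu-c$ and letting $\kappa$ be the root of $w$ near $+\sqrt\lm$, the large-argument asymptotics of Bessel/Hankel functions give, uniformly while $\im(\kappa a)$ stays bounded below by a positive constant,
\[
 \frac{\psi'(a;w)}{\psi(a;w)}=-\I\kappa\Big(1+O\big(\e^{-2\im(\kappa a)}\big)+O\big(1/(\kappa a)\big)\Big),
\]
so the eigenvalue equation becomes $\kappa+k(\mu)=O(\e^{-2\im(\kappa a)})+O(1/a)$. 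As $k(\mu)\approx-\sqrt\lm$ this forces $\kappa\approx\sqrt\lm$, hence $c=\mu-\kappa^2\approx-\I(\eta+s)$ with $\im\kappa\approx s/(2\sqrt\lm)$, and the equation reduces to a scalar relation $\eta+s\asymp\lm\,\e^{-sa/\sqrt\lm}$ together with a phase condition of the form $2\sqrt\lm\,a\equiv\mathrm{const}\pmod{2\pi}$. The phase condition picks out a sequence $a_n\to\infty$, and for each large $a_n$ the scalar relation has a unique $s_n>0$ with $s_n\sim\sqrt\lm\,\log(\lm/\eta)/a_n\to0$. Feeding this approximate solution into Rouch\'e's theorem (or the implicit function theorem) for the function $c\mapsto\psi'(a_n;\mu-c)-\I k(\mu)\psi(a_n;\mu-c)$, which is analytic in $c$ near $0$, then gives, for all large $n$, an exact $c=c_n$ with $|c_n|\le 2(\eta+s_n)$ for which $\mu$ is an eigenvalue of $-\Delta+U_{c_n,0,a_n}$.

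Finally one chooses the parameters to meet the norm bounds, and this is where $p>d$ enters. With $U:=U_{c_n,0,a_n}$ one has $\|U\|_\infty=\max\{|c_n|,\tfrac{|(d-1)(d-3)|}{4a_n^2}\}$ and
\[
 \|U\|_p^p=|c_n|^p\,|B(0,a_n)|+\Big(\tfrac{|(d-1)(d-3)|}{4}\Big)^p\int_{|x|>a_n}|x|^{-2p}\,\rd x,
\]
where the integral is finite because $p>d/2$ and is a constant multiple of $a_n^{\,d-2p}\to0$; hence $\|U\|_\infty<\delta$ once $\eta$ is small and $n$ large. The delicate term is $|c_n|^p|B(0,a_n)|\asymp(\eta+s_n)^p a_n^d$: since $s_n\to0$ it behaves like $\eta^p a_n^d$, which \emph{grows} with $n$, so one cannot just send $n\to\infty$. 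Instead $a_n$ must be taken large enough, namely $a_n\gtrsim\log(1/\eta)/\eta$, to force $s_n\ll\eta$, yet no larger than $\asymp(\eps/\eta)^{p/d}$ to keep $\|U\|_p<\eps$; this window of admissible $a_n$ is nonempty provided $\eta^{(p-d)/d}\log(1/\eta)\lesssim\eps^{p/d}$, which holds for $\eta$ small precisely because $p>d$. Choosing first $\eta$ small and then $n$ inside this window yields $\|U\|_p<\eps$, $\|U\|_\infty<\delta$ and $|\mu-\lm|=\eta<r$ at once. I expect the main obstacle to be exactly this balance: forcing a complex eigenvalue near $\lm$ with small imaginary part and with $|c|$ (hence $\|V\|_\infty$) small is only possible when the radius $a$ is large, and the $L^p$ mass of the plateau, $|c|\,|B(0,a)|^{1/p}\asymp\eta\,a^{d/p}$, can be made arbitrarily small only under the hypothesis $p>d$; the remaining ingredients — the quantitative Bessel/Hankel asymptotics and the limit-circle analysis at $r=0$ when $d\le3$ — are routine.
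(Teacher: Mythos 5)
Your approach is essentially the same as the paper's: build a radial eigenfunction whose interior piece is a Bessel function of argument $\kappa r$ and whose exterior piece is the outgoing wave $\e^{\I k r}$, match at $r=a$, and observe that the $|x|^{-2}$ tail of $U_{c,t,a}$ is precisely what removes the centrifugal term outside the ball so the outside solution is a pure exponential. You also correctly locate the role of $p>d$: with $|c|\asymp\eta$ and $a\asymp\eta^{-1}\log(1/\eta)$ one gets $\|U\|_p^p\asymp\eta^{p-d}\log(1/\eta)^d\to0$, while $\|U\|_\infty\asymp\eta\to0$, which is exactly the paper's estimate.

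Where you differ is the parameterisation of the solution family, and this is where your write-up is soft. You fix $\mu=\lambda-\I\eta$ in advance and then try to find a pair $(c,a)$ solving the matching equation, which forces you into the ``phase condition plus window'' discussion; the paper instead fixes $\re\kappa=\sqrt\lambda$ outright (by choosing $\tau_m=\nu+\I\eta_m$ with $\nu=\sqrt\lambda$), lets the phase condition alone pin down $a_m=(\tfrac{d\pi}{4}+\pi m)/\nu$, couples $\eta_m$ to $a_m$ through $\eta_m\e^{2\eta_ma_m}=\nu$, and then \emph{reads off} $\mu_m=k_m^2$ and $c_m=k_m^2-\tau_m^2$ from an explicit eigenfunction. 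That eliminates any need for Rouch\'e or the implicit function theorem, because the eigenfunction is written down and checked to lie in $W^{2,2}$; what remains is only the asymptotic expansion of $J_{n-1}/J_n$ to verify $\im k_m>0$. In your version the existence step is delegated to Rouch\'e, whose hypotheses you do not verify, and the phase condition is stated in terms of $\sqrt\lambda a$ even though the relevant quantity is $\re\kappa\,a$, which depends on $c$; the paper's choice $\re\kappa=\sqrt\lambda$ is precisely what decouples this. Likewise, the assertion that one needs $s_n\ll\eta$ (hence $a\gtrsim\eta^{-1}\log(1/\eta)$) is the right heuristic for $|c|\lesssim\eta$, but is left unexplained. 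None of this is a fatal flaw — the window $[\eta^{-1}\log(1/\eta),\,(\eps/\eta)^{p/d}]$ is indeed nonempty for small $\eta$ when $p>d$, and the phase sequence has bounded gaps so it meets the window — but the paper's explicit parameterisation by $m$ is cleaner and avoids both the Rouch\'e step and the window argument entirely.
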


\begin{proof}
Define $\nu:=\sqrt{\lm}>0$ and
\beq\label{def.sm}
a_m:=\frac{\frac{d\pi}{4}+\pi m}{\nu}>0, \quad m\in\N_0.
\eeq
For $m\in\N_0$ let $\eta_m>0$ be the unique solution of
\beq\label{eq.defeta}
\eta_m\e^{2\eta_ma_m}=\nu.
\eeq
Note that $a_m\to\infty$ and $\eta_m\to 0$ as $m\to\infty$.
We set
$$ \tau_m:=\nu+\I\eta_m, \quad m\in\N_0,$$
and
\beq\label{def.km}
k_m:=-\I\frac{J_{\frac d 2 -2}(\tau_m a_m)}{J_{\frac d 2 -1}(\tau_m a_m)}\tau_m+\frac{\I(d-3)}{2a_m}, \quad m\in\N_0,
\eeq
where $J_n$ is the Bessel function of the first kind of order $n$ (see~\cite[Chapter~9]{abramowitz}).
It satisfies
$$J_{n}'(z)=J_{n -1}(z)-\frac{n J_{n}(z)}{z}, \quad 
z^2 J_{n}''(z)+zJ_{n}'(z)=(n^2-z^2)J_{n}(z),$$
see~\cite[Equation~9.1.27]{abramowitz}.
For a fixed $m\in\N_0$, define the function
$$g_m(r):=\begin{cases}
\displaystyle \frac{\e^{\I k_m a_m}}{\sqrt{a_m} J_{\frac d 2 -1}(\tau_m a_m)}\frac{\tau_m^{\frac d 2-1}}{2^{\frac d 2 -1}\Gamma(\frac d 2)}, & r=0,\\[5mm]
\displaystyle \frac{\e^{\I k_m a_m}}{\sqrt{a_m} J_{\frac d 2 -1}(\tau_m a_m)}\frac{J_{\frac d 2 -1}(\tau_m r)}{r^{\frac d 2 -1}}, &0< r\leq a_m,\\[5mm]
\displaystyle
\frac{\e^{\I k_m r_m}}{r^{\frac{d-1}{2}}}, & r> a_m.
\end{cases}$$
Using~\eqref{def.km} and~\cite[Equation~9.1.10]{abramowitz}, one may check that both $g_m$ and $g_m'$ are continuous;
for small $r>0$ we expand $g_m(r)=g_m(0)+\mathcal O(r^2)$, hence $\lim_{r\to 0} g_m'(r)=0$.
Let $t\in\R$ be arbitrary. Then $f_m(x):=g_m(|x-te_d|)$, $x\in\R^d$, belongs to $W_{\rm loc}^{2,2}(\R^d)$ and
\begin{align*}
-\Delta f_m(x)&=-g_m''(|x-te_d|)-\frac{d-1}{|x-te_d|}g_m'(|x-te_d|)\\[2mm]
&=\begin{cases}\tau_m^2 f_m(x), &0<|x-te_d|\leq a_m,\\ k_m^2 f_m(x)+\frac{(d-3)(d-1)}{4|x-te_d|^2}f_m(x), &|x-te_d|>a_m. \end{cases}
\end{align*}
Hence 
$$-\Delta f_m+U_{c_m,t,a_m}f_m=\mu_m f_m \quad\text{with}\quad \mu_m:=k_m^2, \quad c_m:=k_m^2-\tau_m^2.$$
In order to ensure $f_m\in W^{2,2}(\R^d)=\dom(-\Delta+U_{c_m,t,a_m})$, we need $\im k_m>0$.
We use the asymptotics of the Bessel function for $z\in\C$ with $|\arg z|<\pi$ and large $|z|$ (see~\cite[Equation~9.2.1]{abramowitz}),
\begin{align*}
J_n(z)&=\sqrt{\frac{2}{\pi z}}\left(\cos\left(z-\frac{(2n+1)\pi}{4}\right)+\e^{|\im z|}\mathcal O(|z|^{-1})\right). 
\end{align*}
A straight forward calculation reveals that, if 
\beq\label{eq.reimw}
\re z\in \frac{(n+1) \pi}{2}+\pi\Z, \quad \im z>0,
\eeq
 then for large $|z|$ we have
\begin{align*}
\frac{J_{n-1}(z)}{J_n(z)}
&=- \frac{\e^{-\im z}+\I \e^{\im z}+\e^{\im z}\mathcal O(|z|^{-1})}{\I \e^{-\im z}+ \e^{\im z}+\e^{\im z}\mathcal O(|z|^{-1})}\\
&=-2\e^{-2\im z}+\I(\e^{-4\im z}-1)+\mathcal O(|z|^{-1}).
\end{align*}
The point $z=\tau_ma_m$ satisfies~\eqref{eq.reimw} for $n=\frac{d}{2}-1$, and hence, for large $m$,~\eqref{def.km} yields
\begin{align*}
k_m&=-\I\tau_m\big(-2\e^{-2\im \tau_m a_m}+\I(\e^{-4\im \tau_m a_m}-1)+\mathcal O(|\tau_m a_m|^{-1})\big)+\mathcal O(a_m^{-1})\\
&=-\nu(1-\e^{-4\eta_m a_m})-2\eta_m\e^{-2\eta_ma_m}\\
&\quad+\I\big(2\nu\e^{-2\eta_ma_m}-\eta_m(1-\e^{-4\eta_ma_m})\big)+\mathcal O(a_m^{-1}).
\end{align*}
Using that~\eqref{eq.defeta} implies $\e^{-2\eta_ma_m}=\frac{\eta_m}{\nu}$ and $a_m=\frac{\ln(\nu/ \eta_m)}{2\eta_m}$, we arrive at
$$k_m=-\nu+\I \eta_m\Big(1+\mathcal O\Big(\ln\Big(\frac{\nu}{\eta_m}\Big)^{-1}\Big)\Big).$$
Since $\eta_m>0$ and $\ln(\nu/\eta_m)^{-1}\to 0$ as $m\to\infty$, we conclude that $\im k_m>0$ for all sufficiently large $m\in\N_0$.
In addition, for large $m\in\N_0$ the eigenvalue $\mu_m=k_m^2$ satisfies
$$\mu_m=\lm-\I 2\nu\eta_m\Big(1+\mathcal O\Big(\ln\Big(\frac{\nu}{\eta_m}\Big)^{-1}\Big)\Big),$$
and hence $\im\,\mu_m<0$ for all sufficiently large $m\in\N_0$. 
One may check that
$$\mu_m-\lm=\mathcal O(\eta_m), \quad c_m=k_m^2-\tau_m^2=\mathcal O(\eta_m)$$
converge to $0$ as $m\to\infty$.
Further note that
\begin{align*}
\|U_{c_m,t,a_m}\|_p^p&= {\rm Vol}(B(0,1))\left(\frac{|c_m|^p a_m^d}{d}+\frac{|d-3|^p|d-1|^p}{4^p(2p-d) a_m^{2p-d}}\right)=\mathcal O\Big(\eta_m^{p-d}\ln\Big(\frac{\nu}{\eta_m}\Big)^d\Big), \\
\|U_{c_m,t,a_m}\|_{\infty}&=\max\left\{|c_m|,\frac{|d-3||d-1|}{4a_m^2}\right\}=\mathcal O(\eta_m).
\end{align*}
Since $p>d$ by the assumptions, both norms converge to $0$ as $m\to\infty$.
Altogether, we see that the claim is satisfied if we set $a:=a_m$, $c:=c_m$, $\mu:=\mu_m$ for a sufficiently large $m\in\N_0$.
\end{proof}

\begin{rem}\label{remsharp}
In  dimension $d=1$ the assumption $p>d=1$ of Lemma~\ref{lemma.deltaeps.dD} is sharp.
In fact, due to Abramov et al.\ \cite{AAD}, for every $V\in L^1(\R)$  every eigenvalue $\mu\in\sigma(-{\rd^2}/\rd x^2+V)\backslash [0,\infty)$ satisfies
\beq\label{AAD}
|\mu|^{\frac{1}{2}}\leq \frac{1}{2}\|V\|_1;
\eeq
hence $\delta>0$ cannot be chosen arbitrarily small as in Lemma~\ref{lemma.deltaeps.dD}.
In addition, in Theorem~\ref{mainthm} for $d=1$ it is impossible to construct $V\in L^1(\R^d)$ since then~\eqref{AAD} forces the non-real eigenvalues to lie in the disk $B(0,\mathcal E^2/4)$, so they cannot accumulate at every point in $[0,\infty)$.

For dimension $d\geq 2$ the sharpness of the assumption $p>d$ is directly related to the following conjecture of Laptev and Safronov~\cite{Laptev-2009}: For $p\in \big(\frac{d}{2},d\big]$ there exists $C_{d,p}>0$ such that
\beq\label{laptev-safronov}|\mu|^{p-\frac{d}{2}}\leq C_{p,d}\|V\|_p^{p}\eeq
for every $V\in L^p(\R^d)$ and every $\mu\in\sigma(-\Delta+V)\backslash[0,\infty)$.
In~\cite{Frank-Simon} the conjecture was proved for \emph{radial} potentials. Note that the potential in Lemma~\ref{lemma.deltaeps.dD} is radial, so $p>d$ is sharp. 
In general (for non-radial potentials) the conjecture has been confirmed for $p\in \big(\frac d 2,\frac{d+1}{2}\big]$ (see~\cite{Frank-2011}) and is still open for $p\in \big(\frac{d+1}{2},d\big]$. If the conjecture is false, then it may also be possible to modify Lemma~\ref{lemma.deltaeps.dD} for a non-radial potential and hence prove Theorems~\ref{mainthm},~\ref{mainthmboundary} for a $p\leq d$.
\end{rem}

\begin{lemma}\label{lemmasum}
Let $V_1, V_2\in L^{\infty}(\R^d)$ be decaying at infinity and such  that there exists $\mu\in\sigma(-\Delta+V_2)\backslash\sigma(-\Delta+V_1)$. 
Then there are $$\mu_t\in\sigma\big(-\Delta+V_1+V_2(\cdot-te_d)), \quad t>0,$$ 
with $\mu_t\to \mu$ as $t\to\infty$.
\end{lemma}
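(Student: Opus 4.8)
The plan is to reduce the eigenvalue problem of $H_t:=-\Delta+V_1+V_2(\cdot-te_d)$ near $\mu$ to a Birman--Schwinger-type condition ``$-1\in\sigma(\text{compact})$'' for an \emph{analytic, compact-operator-valued} family, after passing to a unitarily equivalent family whose resolvent norms are controlled \emph{uniformly in $t$}, and then to invoke the operator-valued Rouch\'e theorem (Gohberg--Sigal). Write $H_j:=-\Delta+V_j$, $j=1,2$. Since $V_1$ is bounded and decays at infinity it is $(-\Delta)$-compact, so $\sigma_{\rm ess}(H_1)=[0,\infty)\subseteq\sigma(H_1)$; as $\mu\notin\sigma(H_1)$ this forces $\mu\notin[0,\infty)$, and since $\sigma_{\rm ess}(H_2)=[0,\infty)$ as well, $\mu$ is an isolated eigenvalue of $H_2$ of finite algebraic multiplicity. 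Fix $r>0$ so small that the closed disk $\overline{B(\mu,r)}$ is contained in $\rho(H_1)$, is disjoint from $[0,\infty)$, and meets $\sigma(H_2)$ only at $\mu$.

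Let $U_t$ be the translation $(U_tf)(x):=f(x-te_d)$ and put $W_t:=V_1(\cdot+te_d)$, so that $U_{-t}H_1U_t=-\Delta+W_t$; in particular $\sigma(-\Delta+W_t)=\sigma(H_1)$ and $\|(-\Delta+W_t-z)^{-1}\|=\|(H_1-z)^{-1}\|$ for every $z\in\rho(H_1)$, which is bounded by some $C$ uniformly in $t$ for $z\in\overline{B(\mu,r)}$. For $z\in B(\mu,r)\subseteq\rho(H_1)$ I would factor $H_t-z=(H_1-z)\bigl(I+(H_1-z)^{-1}V_2(\cdot-te_d)\bigr)$. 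Here $(H_1-z)^{-1}$ is bounded from $L^2(\R^d)$ into $W^{2,2}(\R^d)$ (as $V_1$ is bounded) and $V_2(\cdot-te_d)$ is bounded and decays at infinity, so $(H_1-z)^{-1}V_2(\cdot-te_d)$ is compact by Rellich's theorem; conjugating by $U_t$ it is unitarily equivalent to $T_t(z):=(-\Delta+W_t-z)^{-1}V_2$. Hence, for $z\in B(\mu,r)$, $z\in\sigma(H_t)$ if and only if $I+T_t(z)$ is not boundedly invertible.

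The crucial step is to show that $T_t(z)\to T_\infty(z):=(-\Delta-z)^{-1}V_2$ \emph{in operator norm, uniformly for $z\in\overline{B(\mu,r)}$}. By the resolvent identity $T_t(z)-T_\infty(z)=-(-\Delta+W_t-z)^{-1}W_t\,T_\infty(z)$, whose first factor has norm $\|(H_1-z)^{-1}\|\le C$ uniformly in $t$; on the other hand $W_t\to0$ strongly with $\|W_t\|_\infty\le\|V_1\|_\infty$, and the set $\{T_\infty(z)f:\,z\in\overline{B(\mu,r)},\ \|f\|\le1\}$ is precompact in $L^2(\R^d)$ (the map $z\mapsto T_\infty(z)$ is norm-continuous with compact values), so $\|W_t\,T_\infty(z)\|\to0$ uniformly in $z$, using that strong convergence is uniform on precompact sets. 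Combining, $\max_{z\in\overline{B(\mu,r)}}\|T_t(z)-T_\infty(z)\|\to0$.

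Both $T_t(\cdot)$ and $T_\infty(\cdot)$ are analytic and compact-operator-valued on $B(\mu,r)$, $I+T_\infty(z)$ is invertible on $\partial B(\mu,r)$, and since $H_2-\mu=(-\Delta-\mu)(I+T_\infty(\mu))$ with $-\Delta-\mu$ invertible ($\mu\notin[0,\infty)$), $I+T_\infty(\mu)$ is not invertible; thus $I+T_\infty(\cdot)$ has positive total multiplicity in $B(\mu,r)$. By the Gohberg--Sigal stability theorem for analytic Fredholm families, once $\max_{z\in\partial B(\mu,r)}\|T_t(z)-T_\infty(z)\|$ is small enough, $I+T_t(\cdot)$ has the same positive total multiplicity there; in particular for all sufficiently large $t$ there is $z\in B(\mu,r)$ with $I+T_t(z)$ not invertible, i.e.\ $\sigma(H_t)\cap B(\mu,r)\ne\emptyset$. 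Running this with $r=1/n$ for all large $n$ yields $t_n\uparrow\infty$ with $\sigma(H_t)\cap B(\mu,1/n)\ne\emptyset$ whenever $t\ge t_n$; choosing $\mu_t$ in this intersection for $t_n\le t<t_{n+1}$ (and $\mu_t$ arbitrary in the nonempty $\sigma(H_t)\supseteq[0,\infty)$ otherwise) gives $\mu_t\in\sigma(H_t)$ with $\mu_t\to\mu$. The main obstacle is precisely the uniform \emph{norm} convergence of $T_t$: neither $H_t$ nor its resolvent nor its Riesz projections converge in norm (only their conjugates by $U_{-t}$ converge strongly), and the norm limit appears only after the unitary conjugation has frozen the resolvent norm $\|(-\Delta+W_t-z)^{-1}\|=\|(H_1-z)^{-1}\|$ and one has isolated the genuinely compact factor $T_\infty(z)$, on whose precompact range strong convergence of $W_t$ becomes uniform.
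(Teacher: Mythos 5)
Your proof is correct but takes a genuinely different, more classical route than the paper, although both start from the same translation trick: conjugating by the shift $U_t$ so that $V_2$ stays fixed while $W_t:=V_1(\cdot+te_d)$ recedes, which freezes the resolvent norm $\|(-\Delta+W_t-z)^{-1}\|=\|(-\Delta+V_1-z)^{-1}\|$. From there the paper establishes only \emph{strong} resolvent convergence (and the same for the adjoints) and then invokes the abstract theory of limiting essential spectra from~\cite{boegli-limitingess}, verifying via a Rellich--Kondrachov singular-sequence argument that $\mu$ avoids the limiting essential spectrum. You instead use the Birman--Schwinger factorization $H_t-z=(H_1-z)\bigl(I+(H_1-z)^{-1}V_2(\cdot-te_d)\bigr)$, conjugate the compact factor to $T_t(z)=(-\Delta+W_t-z)^{-1}V_2$, and upgrade to \emph{norm} convergence $T_t(z)\to T_\infty(z):=(-\Delta-z)^{-1}V_2$, uniform on $\overline{B(\mu,r)}$, via the resolvent identity $T_t(z)-T_\infty(z)=-(-\Delta+W_t-z)^{-1}W_t\,T_\infty(z)$ combined with the fact that the strong convergence $W_t\to0$ is uniform on the precompact set $\{T_\infty(z)f:\,z\in\overline{B(\mu,r)},\ \|f\|\le1\}$; the Gohberg--Sigal operator-valued Rouch\'e theorem then transfers the positive total multiplicity of $I+T_\infty(\cdot)$ inside $B(\mu,r)$ to $I+T_t(\cdot)$ for $t$ large. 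Your argument is self-contained modulo Gohberg--Sigal and trades the paper's specialized limiting-essential-spectrum machinery for a hands-on compactness estimate, whereas the paper's route is shorter given that machinery and makes explicit why the essential spectrum cannot pollute the spectral approximation near $\mu$ --- information your precompactness step encodes implicitly.
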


\begin{proof}
 First note that 
\beq\label{eq.shift}
\sigma\big(-\Delta+V_1+V_2(\cdot-te_d)\big)=\sigma\big(-\Delta+V_1(\cdot+te_d)+V_2\big), \quad t>0.
\eeq
Next we prove that, for every $z\in \C$ with ${\rm dist}(z,[0,\infty))>\|V_1\|_{\infty}+\|V_2\|_{\infty}$,  we have strong resolvent convergence
\beq\label{gsr}
 \big(-\Delta+V_1(\cdot+te_d)+V_2-z\big)^{-1}
\slong \big(-\Delta+V_2-z\big)^{-1}, \quad t\to\infty,
\eeq
and the same holds for the adjoint operators.
To this end, first note that a Neumann series argument yields
\begin{align*}
&z\in\underset{t>0}{\bigcap}\rho\big(-\Delta+V_1(\cdot+te_d)+V_2\big)\cap\rho(-\Delta+V_2), \\
&\sup_{t>0}\big\| \big(-\Delta+V_1(\cdot+te_d)+V_2-z\big)^{-1}\big\|\\
&\leq \|(-\Delta-z)^{-1}\|\, \sup_{t>0}\big\|\big(I+(V_1(\cdot+te_d)+V_2)(-\Delta-z)^{-1}\big)^{-1}\big\|\\
&\leq \frac{1}{{\rm dist}(z,[0,\infty))}\frac{1}{1-\frac{\|V_1\|_{\infty}+\|V_2\|_{\infty}}{{\rm dist}(z,[0,\infty))}}
=  \frac{1}{{\rm dist}(z,[0,\infty))-(\|V_1\|_{\infty}+\|V_2\|_{\infty})}.
\end{align*}
The space $C_0^{\infty}(\R^d)$ is dense in $W^{2,2}(\R^d)$ and hence a core of $-\Delta+V_2$.
Let $f\in C_0^{\infty}(\R^d)$. Then $f\in W^{2,2}(\R^d)$, and the assumption $V_1(x)\to 0$ as $|x|\to\infty$ yields
$$\big\|\big(-\Delta+V_1(\cdot+te_d)+V_2\big)f-(-\Delta+V_2)f\|\leq \sup_{x\in ({\rm supp}f+t e_d)}|V_1(x)| \|f\|\tolong 0, \quad t\to\infty.$$
Now the strong resolvent convergence in~\eqref{gsr} follows from~\cite[Theorem~3.1, Proposition~2.16~i)]{boegli-chap1}, and the strong resolvent convergence of the adjoint operators $$ \big(-\Delta+V_1(\cdot+te_d)+V_2\big)^*=-\Delta+\overline{V_1(\cdot+te_d)}+\overline{V_2}, \quad t>0,$$
 to $(-\Delta+V_2)^*=-\Delta+\overline{V_2}$ is proved analogously.

By~\cite[Theorem~2.3~i)]{boegli-limitingess}, in the limit $t\to\infty$ the isolated eigenvalue $\mu\in\sigma\big(-\Delta+V_2\big)\backslash\sigma(-\Delta+V_1)$ is approximated by points $\mu_t\in\sigma\big(-\Delta+V_1(\cdot+te_d)+V_2)$, $t>0$, 
provided that the so-called \emph{limiting essential spectrum} satisfies
\beq\label{sigmaess}
\mu\notin\sigma_{\rm ess}((-\Delta+V_1(\cdot+te_d)+V_2)_{t>0})\cup\sigma_{\rm ess}(((-\Delta+V_1(\cdot+te_d)+V_2)^*)_{t>0})^*.
\eeq
This, together with~\eqref{eq.shift}, then proves the claim.
So it is left to prove~\eqref{sigmaess}.

By definition (see~\cite{boegli-limitingess}), the point $\mu$ belongs to set on the right hand side of~\eqref{sigmaess} only if there exist an infinite subset $I\subset (0,\infty)$ and $f_t\in W^{2,2}(\R^d)$, $t\in I$, with $\|f_t\|=1$, $f_t\stackrel{w}{\to}0$ and, in the limit $t\to\infty$,
\beq\label{sigmaessconv}
\begin{aligned}
\big\|\big(-\Delta+V_1(\cdot+te_d)+V_2-\mu\big)f_t\big\|&\tolong 0 \\
\text{or}\quad \big\|\big(-\Delta+\overline{V_1(\cdot+te_d)}+\overline{V_2}-\overline{\mu}\big)f_t\big\|&\tolong 0.
\end{aligned}
\eeq
It is easy to see that the latter implies that $\|f_t\|_{W^{1,2}(\R^d)}$, $t\in I$, are uniformly bounded.
Since, for any $r>0$, the space $W^{1,2}(B(0,r))$ is compactly embedded in $L^2(B(0,r))$ by the Rellich-Kondrachov theorem,
the weak convergence $f_t\stackrel{w}{\to}0$ implies $\|\chi_{B(0,r)}f_t\|\to 0$ and hence $\|\chi_{B(0,r)}V_2 f_t\|\to 0$ as $t\to\infty$. Moreover, the assumption $V_2(x)\to 0$ as $|x|\to\infty$ yields 
$$\sup_{t>0}\|\chi_{\R^d\backslash B(0,r)}V_2f_t\|\leq \sup_{|x|>r}|V_2(x)|\tolong 0, \quad r\to\infty.$$
Altogether, in the limit $t\to\infty$ we obtain $\|V_2f_t\|\to 0$ and hence, by~\eqref{sigmaessconv},
\begin{align*}
\big\|\big(-\Delta+V_1-\mu\big)f_t(\cdot-te_d)\big\|=\big\|\big(-\Delta+V_1(\cdot+te_d)-\mu\big)f_t\big\|&\tolong 0 \\
 \text{or}\quad \big\|\big(-\Delta+\overline{V_1}+\overline{\mu}\big)f_t(\cdot-te_d)\big\|= \big\|\big(-\Delta+\overline{V_1(\cdot+te_d)}+\overline{\mu}\big)f_t\big\|&\tolong 0.
\end{align*}
Therefore, in either case $\mu$ needs to belong to $\sigma(-\Delta+V_1)=\sigma(-\Delta+\overline{V_1})^*$, which is excluded by the assumptions.
This proves the claim~\eqref{sigmaess}.
\end{proof}

Now we are ready to prove the main result.

\begin{proof}[Proof of Theorem~{\rm\ref{mainthm}}]
Consider an enumeration of $(\Q\cap (0,\infty))\times \N$, i.e.\ a bijective map 
$$\N\ni n\mapsto (q_n,m_n)^t\in (\Q\cap (0,\infty))\times \N.$$
Set $\gamma_0:=\infty$.
By induction over $n\in\N$ we construct $c_n,t_n,a_n$ and $\gamma_n$  such that 
$$H_n:=-\Delta+\sum_{j=1}^nU_{c_j,t_j,a_j}$$
satisfies the following:
\begin{enumerate}
\item[\rm i)]
The norms of the functions are bounded by
\beq\label{eq.norms}
\begin{aligned}
\|U_{c_n,t_n,a_n}\|_p&<\eps_n:=\frac{6\mathcal E}{\pi^2n^2}, \\
\|U_{c_n,t_n,a_n}\|_{\infty}&<\delta_n:=\frac{6 \min\{\gamma_{n-1},\mathcal E\}}{\pi^2n^2},
\end{aligned}
\eeq
and 
\beq\label{eq.mun}
\exists\,\mu_n\in\sigma(H_n): \quad \im \mu_n<0, \quad |\mu_n-q_n|<\frac{1}{2m_n}.
\eeq
\item[\rm ii)]
We have $0<\gamma_n\leq \gamma_{n-1}$ and for any $U_n\in L^{\infty}(\R^d)$ with $\|U_n\|_{\infty}<\gamma_n$ there is  $\lm_n\in\sigma(H_n+U_n)$ such that
$$|\lm_n-\mu_n|<\frac{{\rm dist}(\mu_n,[0,\infty))}{2}.$$
\end{enumerate}  
We start with $n=1$.
 By Lemma~\ref{lemma.deltaeps.dD} applied to 
$$\lm=q_1, \quad \eps=\eps_1, \quad \delta=\delta_1, \quad r=\frac{1}{2m_1}$$ 
and an arbitrary $t_1\in\R$, there exist $c_1\in \C$,  $a_1>0$ and an eigenvalue satisfying~\eqref{eq.mun} for $n=1$.
By~\cite[Theorems~IV.2.14, 3.16]{kato}, there exists $\gamma_1$ satisfying claim~ii) for $n=1$.

Now assume that for $j=1,\dots, n-1$ the constants $c_j,t_j,a_j$ and $\gamma_j$ have been constructed. We construct $c_n,t_n,a_n$ and $\gamma_n$ so that $H_n$ satisfies~i) and~ii).
We apply Lemma~\ref{lemma.deltaeps.dD} to 
$$\lm=q_n, \quad \eps=\eps_n, \quad \delta=\delta_n, \quad r=\min\left\{{\rm dist}\big(\lm,\sigma(H_{n-1})\big),\frac{1}{4m_n}\right\}.$$
In this way we obtain $c_n\in\C$ and $a_n>0$  such that, for any $t\in\R$,
the Schr\"odinger operator 
$-\Delta+U_{c_n,t,a_n}$
 has an eigenvalue $\mu\in\sigma(-\Delta+U_{c_n,t,a_n})\backslash\sigma(H_{n-1})$ with $\im\mu<0$  and
$$\|U_{c_n,t,a_n}\|_p<\eps_n, \quad \|U_{c_n,t,a_n}\|_{\infty}<\delta_n, \quad |\mu-q_n|<\frac{1}{4m_n}.$$
Lemma~\ref{lemmasum} implies that, for $t_n:=t$ sufficiently large, the operator $H_n=H_{n-1}+U_{c_n,t_n,a_n}$ has an eigenvalue $\mu_n$ with $\im\mu_n<0$, $|\mu_n-\mu|<1/(4m_n)$ and hence $|\mu_n-q_n|<1/(2m_n)$.
This proves claim~i), and claim~ii) follows again from~\cite[Theorems~IV.2.14, 3.16]{kato}.

Finally we prove that the potential $$V:=\sum_{j=1}^{\infty}U_{c_j,t_j,a_j}$$ satisfies the claims of the theorem.
By Minkowski's inequality and~\eqref{eq.norms}, 
$$\max\{\|V\|_p, \|V\|_{\infty}\}<  \sum_{j=1}^{\infty} \max\{\eps_j,\delta_j\}\leq \frac{6\mathcal E}{\pi^2} \sum_{j=1}^{\infty} \frac{1}{j^2}= \mathcal E.$$
Moreover, for $n\in\N$ the $L^{\infty}(\R^d)$ norm of $U_n:=\sum_{j=n+1}^{\infty}U_{c_j,t_j,a_j}$ is estimated as
$$\|U_n\|_{\infty}< \sum_{j=n+1}^{\infty}\delta_j\leq \frac{6 \gamma_n}{\pi^2}\sum_{j=n+1}^{\infty}\frac{1}{j^2}< \gamma_n.$$
So the above claim~ii) implies for $H_n+U_n=H$ that
$$\exists\,\lm_n\in\sigma(H): \quad \big|\lm_n-\mu_n\big|<\frac{{\rm dist}(\mu_n,[0,\infty))}{2}.$$
Hence $\im \lm_n<0$ and
$$|\lm_n-q_n|\leq \big|\lm_n-\mu_n\big|+| \mu_n-q_n| < \frac{{\rm dist}(\mu_n,[0,\infty))}{2}+|\mu_n-q_n|<\frac{1}{m_n},$$
i.e.\ $\lm_n\in B(q_n,\frac{1}{m_n})$, $n\in\N$. Now it is easy to see that every point in $[0,\infty)$, which is the closure of $\Q\cap (0,\infty)$, is an accumulation point of $\{\lm_n:\,n\in\N\}$.
\end{proof}

\section{Schr\"odinger operator in $L^2(\R_+^d)$}\label{secRdboundary}

In this section we study Schr\"odinger operators on the half-space $\R_+^d$, and for the proof of Lemma~\ref{lemmasumBoundary} below also on the shifted half-space $\R_+^d+t e_d$ for some $t\in\R$. 
We fix an angle $\phi\in [0,\pi)$ which determines the Robin boundary condition.
Throughout this section, every operator in $L^2(\R_+^d+t e_d)$ for some $t \in\R$ is assumed to have the operator domain 
$$\big\{f\in W^{2,2}(\R_+^d+t e_d):\,\cos(\phi)\partial_{x_d} f+\sin(\phi) f=0\text{ on }\partial(\R_+^d+t e_d)\big\},$$ 
and operators in $L^2(\R)$ have domains $W^{2,2}(\R)$.

The following result is almost the same as Lemma~\ref{lemma.deltaeps.dD}; note that here $t$ is not arbitrary but needs to be sufficiently large, and the eigenvalue $\mu_t$ depends on $t$.

\begin{lemma}\label{lemma.deltaeps.half-line}
Let $\lm\in (0,\infty)$ and $p>d$.
For any $\eps,\delta,r>0$ there exist $a>0$ and $c\in\C$ with
\beq\label{eq.norms2}
 \|U_{c,t,a}\|_{p}< \eps, \quad \|U_{c,t,a}\|_{\infty}< \delta,
\eeq
and such that, for every sufficiently large $t>0$, the operator  
$$-\Delta+\chi_{\R_+^d}U_{c,t,a}\quad \text{in}\quad L^2(\R_+^d)$$ 
has an eigenvalue $\mu_t$ with $\im\mu_t<0$ and $|\mu_t-\lm|< r$.
\end{lemma}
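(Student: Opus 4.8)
The plan is to reduce the half-space problem to the whole-space situation already handled in Lemma~\ref{lemma.deltaeps.dD}, using the same radial ansatz but now with the ball $B(te_d,a)$ pushed far enough into $\R_+^d$ that the eigenfunction is, up to an exponentially small error, supported away from the boundary $\partial\R_+^d=\{x_d=0\}$. Concretely, I would first invoke Lemma~\ref{lemma.deltaeps.dD} with the given $\lm,p$ and with $\eps,\delta$ as prescribed (and some auxiliary radius $r_0<r$, to leave room for a perturbation estimate), obtaining $a>0$, $c\in\C$ and $\mu\in\C$ with $\im\mu<0$, $|\mu-\lm|<r_0$, satisfying the two norm bounds~\eqref{eq.norms2} (which are independent of $t$), and such that $\mu$ is an eigenvalue of $-\Delta+U_{c,t,a}$ in $L^2(\R^d)$ for every $t\in\R$, with explicit eigenfunction $f(x)=g(|x-te_d|)$ decaying like $|x-te_d|^{-(d-1)/2}\e^{-\im k\,|x-te_d|}$ at infinity, where $\im k>0$.

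The key point is that the $W^{2,2}$ mass of this eigenfunction outside the slab $\{x_d>\tfrac{t}{2}\}$ decays exponentially in $t$: since $\im k>0$, one has $\|f\|_{W^{2,2}(\R^d\setminus(\R_+^d+\frac t2 e_d))}=\mathcal O(\e^{-ct})$ as $t\to\infty$ for some $c>0$. Now fix a smooth cutoff $\theta$ with $\theta\equiv 1$ on $\{x_d\geq \tfrac{3t}{4}\}$ and $\theta\equiv 0$ on $\{x_d\leq \tfrac{t}{2}\}$, chosen so that $\|\nabla\theta\|_\infty,\|\Delta\theta\|_\infty$ stay bounded (this forces the transition region to have width proportional to $t$, which only improves the estimates); set $\widetilde f:=\theta f$. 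Then $\widetilde f$ belongs to the operator domain of $-\Delta+\chi_{\R_+^d}U_{c,t,a}$ in $L^2(\R_+^d)$ — it vanishes near $\partial\R_+^d$, hence satisfies the Robin condition trivially — and, by the commutator identity $(-\Delta)(\theta f)=\theta(-\Delta f)-2\nabla\theta\cdot\nabla f-(\Delta\theta)f$ together with the fact that $\chi_{\R_+^d}U_{c,t,a}=U_{c,t,a}$ on $\supp\widetilde f$ for $t>a$, we get
\[
\big\|\big(-\Delta+\chi_{\R_+^d}U_{c,t,a}-\mu\big)\widetilde f\big\|
\le \big\|\theta(-\Delta+U_{c,t,a}-\mu)f\big\|+C\big(\|\nabla f\|_{L^2(\{\frac t2<x_d<\frac{3t}4\})}+\|f\|_{L^2(\{\frac t2<x_d<\frac{3t}4\})}\big)
=\mathcal O(\e^{-ct}),
\]
while $\|\widetilde f\|\to\|f\|>0$. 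Hence the relative residual $\|(-\Delta+\chi_{\R_+^d}U_{c,t,a}-\mu)\widetilde f\|/\|\widetilde f\|\to 0$, so $\mu$ lies within distance $\mathcal O(\e^{-ct})$ of the spectrum of $-\Delta+\chi_{\R_+^d}U_{c,t,a}$ in $L^2(\R_+^d)$.

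To upgrade ``$\mu$ is near the spectrum'' to ``there is a genuine \emph{eigenvalue} $\mu_t$ near $\mu$ with $\im\mu_t<0$'', I would argue that the essential spectrum of $-\Delta+\chi_{\R_+^d}U_{c,t,a}$ in $L^2(\R_+^d)$ is $[0,\infty)$ (the potential decays at infinity, and under real Robin boundary conditions the free half-space Laplacian has spectrum $[0,\infty)$; for $\phi\in[0,\pi)$ the boundary condition is real so no extra negative eigenvalue of the boundary-layer type interferes with the relevant region) — this places $\mu$, which has $\im\mu<0$, a fixed positive distance from $\sigma_{\rm ess}$, so a point of the spectrum within distance $\mathcal O(\e^{-ct})$ of $\mu$ must, for $t$ large, be an isolated eigenvalue $\mu_t$ of finite algebraic multiplicity, again with $\im\mu_t<0$. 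Choosing $t$ large enough that $\mathcal O(\e^{-ct})<r-r_0$ gives $|\mu_t-\lm|<r$, completing the proof. The main obstacle is making the residual estimate clean: one must verify that the commutator terms supported in the transition slab $\{\tfrac t2<x_d<\tfrac{3t}4\}$ really are exponentially small, which hinges on the explicit exterior asymptotics of $g_m$ from Lemma~\ref{lemma.deltaeps.dD} (power times decaying exponential) and on keeping $\|\nabla\theta\|_\infty=\mathcal O(1/t)$; a secondary point is confirming that the real Robin boundary condition does not create spurious spectrum near $\mu$, for which one can compare resolvents with the free Robin Laplacian or cite the standard Weyl-type characterization of $\sigma_{\rm ess}$.
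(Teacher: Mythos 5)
Your plan takes a genuinely different route from the paper. The paper reduces to the companion Lemma~\ref{lemmasumBoundary} (applied with $V_1\equiv 0$, $V_2=U_{c,t,a}$), which rests on strong resolvent convergence of the operator \emph{and} of its adjoint together with the characterization of the limiting essential spectrum from~\cite{boegli-limitingess}, to conclude that the isolated eigenvalue $\mu$ is actually approximated by eigenvalues $\mu_t$. You instead propose an explicit quasi-mode argument: cut off the whole-space eigenfunction away from the boundary and estimate the residual. This would be a cleaner and more self-contained proof if it went through, and the localization estimate itself (exponential decay of the exterior tail, commutator terms in the transition slab $\{\tfrac t2<x_d<\tfrac{3t}{4}\}$ of size $\mathcal O(\e^{-ct})$, $\|\widetilde f\|\to\|f\|>0$) is fine.

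However, there is a genuine gap in the passage from ``small residual'' to ``near the spectrum.'' You assert that $\|(-\Delta+\chi_{\R_+^d}U_{c,t,a}-\mu)\widetilde f\|/\|\widetilde f\|\to 0$ implies that $\mu$ lies within $\mathcal O(\e^{-ct})$ of $\sigma(-\Delta+\chi_{\R_+^d}U_{c,t,a})$. That implication holds for self-adjoint (or normal) operators via the spectral theorem, but fails in general for non-selfadjoint ones: a normalized $u$ with $\|(A-\mu)u\|\le\eps$ only places $\mu$ in the $\eps$-pseudospectrum of $A$, which for non-normal $A$ can be much larger than an $\eps$-neighbourhood of $\sigma(A)$ (think of a Jordan block with a large superdiagonal entry). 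Here the potential is complex, so the half-space operator is non-selfadjoint and the implication is not available. Knowing $\sigma_{\rm ess}=[0,\infty)$ and $\im\mu<0$ does not repair this: the discrete spectrum in the lower half-plane could a priori be empty or far from $\mu$ while the resolvent norm at $\mu$ still blows up as $t\to\infty$. To close the gap one must establish genuine spectral approximation --- e.g.\ convergence of Riesz projections along a small circle around $\mu$ --- which requires controlling the resolvents themselves (and those of the adjoints) and ruling out spectral pollution from the limiting essential spectrum. That is precisely the content of Lemma~\ref{lemmasumBoundary} and of~\cite[Theorem~2.3~i)]{boegli-limitingess}, which your argument would need to reintroduce, effectively bringing you back to the paper's route.
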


For the proof we use the following result, which is the analogue of Lemma~\ref{lemmasum}.
\begin{lemma}\label{lemmasumBoundary}
Let $V_1\in  L^{\infty}(\R_+^d)$, $V_2\in L^{\infty}(\R^d)$ be decaying at infinity, and define the operators 
$$
H_1:=-\Delta+V_1 \quad \text{in}\quad L^2(\R_+^d), \quad
H_2:=-\Delta+V_2\quad \text{in}\quad L^2(\R^d).
$$
Assume that there exists $\mu\in\sigma(H_2)\backslash\sigma(H_1)$. 
Then, for any $t>0$, the operator 
$$H_1+\chi_{\R_+^d}V_2(\cdot-te_d)\quad \text{in}\quad L^2(\R_+^d)$$
 has an eigenvalue $\mu_t$
with $\mu_t\to \mu$ as $t\to\infty$.
\end{lemma}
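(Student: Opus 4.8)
The plan is to mirror the proof of Lemma~\ref{lemmasum}, adapting it to the half-space setting where the shift interacts with the boundary $\partial\R_+^d$. The key conceptual point is that $H_1$ lives on the fixed half-space $L^2(\R_+^d)$ (with its Robin boundary condition), while the perturbation $\chi_{\R_+^d}V_2(\cdot-te_d)$ is the restriction to $\R_+^d$ of the shifted bump; as $t\to\infty$ the support of $V_2(\cdot-te_d)$ moves deep into the interior of $\R_+^d$ (in the $+e_d$ direction), so eventually $\chi_{\R_+^d}V_2(\cdot-te_d)=V_2(\cdot-te_d)$ on all of $\R^d$ up to negligible error, i.e.\ the characteristic-function truncation becomes immaterial. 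Following the reformulation in~\eqref{eq.shift}, it is cleaner to shift $H_1$ instead: the operator $H_1+\chi_{\R_+^d}V_2(\cdot-te_d)$ in $L^2(\R_+^d)$ is unitarily equivalent (via translation by $-te_d$) to $-\Delta+V_1(\cdot+te_d)+\chi_{\R_+^d+te_d}V_2$ in $L^2(\R_+^d+te_d)$, and since $\R_+^d+te_d$ exhausts $\R^d$ as $t\to\infty$, one expects this family to converge in the appropriate sense to $-\Delta+V_2$ in $L^2(\R^d)$.

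The steps I would carry out are: (1) Set up the unitary reduction as above and establish \emph{generalised strong resolvent convergence} (in the sense of~\cite{boegli-chap1}) of $-\Delta+V_1(\cdot+te_d)+\chi_{\R_+^d+te_d}V_2$ in $L^2(\R_+^d+te_d)$ to $-\Delta+V_2$ in $L^2(\R^d)$, together with the same for adjoints. Here one needs a suitable family of identification operators $L^2(\R_+^d+te_d)\to L^2(\R^d)$ — simply extension by zero — and one checks on the core $C_0^{\infty}(\R^d)$: for $f\in C_0^{\infty}(\R^d)$, once $t$ is large enough $\supp f\subset \R_+^d+te_d$, $f$ satisfies the (shifted) Robin condition trivially, $\chi_{\R_+^d+te_d}V_2 f=V_2 f$, and $\|V_1(\cdot+te_d)f\|\leq \sup_{x\in\supp f+te_d}|V_1(x)|\,\|f\|\to 0$ since $V_1$ decays at infinity. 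A uniform resolvent bound off a large disk follows from the same Neumann-series estimate as in Lemma~\ref{lemmasum}, using $\|V_1\|_\infty+\|V_2\|_\infty$. (2) Compute the limiting essential spectrum of this family and show $\mu\notin \sigma_{\rm ess}$ of the family and of the adjoint family; by~\cite[Theorem~2.3~i)]{boegli-limitingess} the isolated eigenvalue $\mu\in\sigma(H_2)\setminus\sigma(H_1)$ is then approximated by eigenvalues $\mu_t$ of the shifted operators, which after undoing the unitary reduction gives the claimed $\mu_t\in\sigma(H_1+\chi_{\R_+^d}V_2(\cdot-te_d))$ with $\mu_t\to\mu$. (3) For the limiting essential spectrum computation, argue as in Lemma~\ref{lemmasum}: a Weyl-type sequence $f_t$ with $\|f_t\|=1$, $f_t\stackrel{w}{\to}0$ and $\|(-\Delta+V_1(\cdot+te_d)+\chi_{\R_+^d+te_d}V_2-\mu)f_t\|\to 0$ (or the adjoint version) has uniformly bounded $W^{1,2}$ norms; Rellich–Kondrachov on balls plus decay of $V_2$ gives $\|\chi_{\R_+^d+te_d}V_2 f_t\|\to 0$; translating back by $-te_d$, the sequence $f_t(\cdot-te_d)$ (extended by zero, now living in $L^2(\R_+^d)$) is an approximate eigenvector for $-\Delta+V_1$ in $L^2(\R_+^d)$ at $\mu$, forcing $\mu\in\sigma(H_1)$, contradiction; the adjoint case forces $\mu\in\sigma(H_1^*)^*=\sigma(H_1)$ likewise.

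The main obstacle, compared to Lemma~\ref{lemmasum}, is the \emph{varying domain}: the operators $-\Delta+V_1(\cdot+te_d)+\chi_{\R_+^d+te_d}V_2$ act in the $t$-dependent spaces $L^2(\R_+^d+te_d)$ with $t$-dependent Robin boundary conditions, so the notion of convergence must be the \emph{generalised} strong resolvent convergence with identification operators, and one must verify that the limiting essential spectrum machinery of~\cite{boegli-limitingess} applies in this varying-domain form. Concretely, the delicate point is to confirm that a Weyl sequence $f_t\in L^2(\R_+^d+te_d)$ (extended by zero to $\R^d$) that converges weakly to $0$ in $L^2(\R^d)$ still yields, after the compact-embedding argument, genuine control of $\|V_2 f_t\|$ — this uses that the extension-by-zero operators are isometries and that the $W^{1,2}$-bound survives extension by zero (true because $f_t$ lies in the Robin form domain, whose extension by zero need not be in $W^{1,2}(\R^d)$ in general, but the relevant estimate only needs the $W^{1,2}$ norm on a fixed ball $B(0,r)\subset \R_+^d+te_d$ for large $t$, which is unaffected by the boundary). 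Once this is in place, the rest is a routine transcription of the $\R^d$ argument, and I would state explicitly that the truncation $\chi_{\R_+^d+te_d}$ disappears on any fixed compact set for $t$ large, which is what makes the limit operator the full-space $-\Delta+V_2$.
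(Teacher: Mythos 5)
Your proposal is correct and follows essentially the same route as the paper: perform the unitary translation, establish generalised strong resolvent convergence (for the operators and their adjoints) to $-\Delta+V_2$ in $L^2(\R^d)$ using $C_0^{\infty}(\R^d)$ as a core, and invoke the limiting-essential-spectrum theorem from~\cite{boegli-limitingess}, controlling the limiting essential spectrum exactly as in Lemma~\ref{lemmasum}. One sign slip to fix: translating $H_1+\chi_{\R_+^d}V_2(\cdot-te_d)$ by $-te_d$ produces an operator in $L^2(\R_+^d-te_d)$ (not $L^2(\R_+^d+te_d)$), and it is $\R_+^d-te_d=\{x_d>-t\}$ that exhausts $\R^d$ as $t\to\infty$, whereas $\R_+^d+te_d=\{x_d>t\}$ shrinks; with that correction the rest of your argument goes through as written.
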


\begin{proof}
Define operators
$$H_{2, t}:=-\Delta+\chi_{(\R_+^d-te_d)}V_2\quad \text{in}\quad L^2(\R_+^d-te_d),\quad t>0.$$
 Note that 
\beq\label{eq.shift2}
\sigma\big(H_1+\chi_{\R_+^d}V_2(\cdot-te_d)\big)=\sigma\big(H_{2,t}+V_1(\cdot+te_d)\big), \quad t>0.
\eeq
Analogously as in the proof of Lemma~\ref{lemmasum}, one can show that for every $z\in \C$ with ${\rm dist}(z,[0,\infty))$ sufficiently large,  we have strong resolvent convergence
\begin{align*}
 \big(H_{2,t}+V_1(\cdot+te_d)-z\big)^{-1}
\slong &\big(H_2-z\big)^{-1}, \quad t\to\infty,
\end{align*}
and the same holds for the adjoint operators;
note that here we use that every $f\in C_0^{\infty}(\R^d)$ belongs to $\dom(H_{2,t})$ for all $t>0$ so large that ${\rm supp}f\subset (\R_+^d-t e_d)$.
Therefore, by~\cite[Theorem~2.3~i)]{boegli-limitingess}, in the limit $t\to\infty$ the isolated eigenvalue $\mu\in\sigma(H_2)\backslash\sigma(H_1)$ is approximated by points $\mu_t\in\sigma\big(H_{2,t}+V_1(\cdot+te_d)\big)$, $t>0$, 
provided that 
$$\mu\notin\sigma_{\rm ess}\big(\big(H_{2,t}+V_1(\cdot+te_d)\big)_{t>0}\big)\cup\sigma_{\rm ess}\big(\big(\big(H_{2,t}+V_1(\cdot+te_d)\big)^*\big)_{t>0}\big)^*.$$
Similarly as in the proof of Lemma~\ref{lemmasum}, one may check that the set on the right is contained in $\sigma(H_1)=\sigma(H_1^*)^*$, and~$\mu\notin\sigma(H_1)$ by the assumptions.
This, together with~\eqref{eq.shift2}, proves the claim.
\end{proof}

\begin{proof}[Proof of Lemma~{\rm\ref{lemma.deltaeps.half-line}}]
First we return to the problem on the whole~$\R^d$. By Lemma~\ref{lemma.deltaeps.dD} applied to $t$, $\eps$, $\delta$ and $r/2$, there exist $a>0$ and $c\in\C$  such that $U_{c,t,a}$ satisfies~\eqref{eq.norms2}, and so that the operator $-\Delta+U_{c,t,a}$ in $L^2(\R^d)$ has an eigenvalue $\mu$ (independent of $t$) with $\im\mu<0$ and $|\mu-\lm|<r/2.$
By Lemma~\ref{lemmasumBoundary} applied to $V_1\equiv 0$, $V_2=U_{c,t,a}$, for every $t>0$ sufficiently large, the operator $-\Delta+\chi_{\R_+^d}U_{c,t,a}$ in $L^2(\R_+^d)$ has an eigenvalue $\mu_t$ with $\im \mu_t<0$ and $|\mu_t-\mu|<r/2$, hence $|\mu_t-\lm|<r$.
\end{proof}

Now the proof of the main result is straight forward.
\begin{proof}[Proof of Theorem~{\rm\ref{mainthmboundary}}]
We proceed analogously as in the proof of Theorem~\ref{mainthm} but use Lemmas~\ref{lemma.deltaeps.half-line},~\ref{lemmasumBoundary} instead of Lemmas~\ref{lemma.deltaeps.dD},~\ref{lemmasum}.
Note that here $t_1$ is not arbitrary but given (sufficiently large) by  Lemma~\ref{lemma.deltaeps.half-line}.
\end{proof}

\subsection*{Acknowledgements}
The author would like to thank Ari Laptev for drawing her attention to this problem.
This work was supported by the Swiss National Science Foundation (SNF), Early Postdoc.Mobility project P2BEP2\_159007.

\bibliography{mybib}{}
\bibliographystyle{acm}

\end{document}